\newcommand{\dd}{\text{\rm d}}             
\newcommand{\1}{{\bf{1}}}
\theoremstyle{plain}
\newtheorem{theorem}{Theorem}
\newtheorem{proposition}[theorem]{Proposition}
\theoremstyle{definition}
\newtheorem{remark}[theorem]{Remark}
\renewenvironment{proof}[1][] {\noindent {\bf Proof#1.} }{\hspace*{\fill}$\square$\medskip\par}
\newcommand{\N}{{\mathbf N}}
\newcommand{\E}{{\mathbf E}}
\renewcommand{\P}{{\mathbf P}}
\author{Michael Scheutzow%
  \thanks{Institut f\"ur Mathematik, MA 7-5, Fakult\"at II, 
        Technische Universit\"at Berlin, 
        Stra\ss e des 17.~Juni 136, 10623 Berlin, FRG;  \ 
        \small{\tt ms{\scriptsize @}math.tu-berlin.de}}}
\title{A Stochastic Gronwall Lemma}
\date{\today}
\begin{document}  \maketitle

\begin{abstract}
%
\noindent We prove a stochastic Gronwall lemma of the following type: if $Z$ is an adapted nonnegative continuous process which satisfies a linear integral inequality 
with an added continuous local martingale $M$ and a process $H$ on the right hand side, then for any $p \in (0,1)$ the $p$-th moment of the supremum of $Z$ is 
bounded by a constant $\kappa_p$ (which does not depend on $M$) times the  $p$-th moment of the supremum of $H$. Our main tool is a martingale inequality 
which is due to D.~Burkholder. We provide an alternative simple proof of the martingale inequality which provides an explicit numerical value for the constant 
$c_p$ appearing in the inequality which is at most four times as large as the optimal constant. 

  \par\medskip

  \noindent\footnotesize
  \emph{2010 Mathematics Subject Classification} 
  Primary\, 60G44 
  \ Secondary\, 60H10, 60E15, 26D15
\end{abstract}

\noindent{\slshape\bfseries Keywords.} stochastic Gronwall lemma, martingale inequality.\\

\vspace{.1cm}


In this note we first state a martingale inequality which is due to D.~Burkholder \cite{B75} and which estimates the $p$-th moment of the 
supremum of a continuous local martingale by a constant $c_{p}$ times the $p$-th moment of its negative infimum for $0<p<1$. Then we apply the martingale inequality and prove a 
stochastic Gronwall lemma for a nonnegative process $Z$.  The stochastic Gronwall lemma is useful when proving 
existence and uniqueness of solutions to stochastic differential equations satisfying only a one-sided Lipschitz condition (where the usual 
proof using the Burkholder-Davis-Gundy inequality does not apply). The point of the stochastic Gronwall lemma is that it provides an upper bound for the 
$p$-th moment of $Z$ which does not depend on the local martingale $M$ on the right-hand side of the inequality. The price one has to pay for this uniformity 
with respect to $M$ is that one has to assume $p<1$. 

We start by formulating the martingale inequality.
\begin{proposition}\label{martin} For each $p \in (0,1)$ and each continuous local martingale 
$M(t), \, t \ge 0$ starting at $M(0)=0$, we have
\begin{equation}\label{inequality}
\E \big(\sup_{t \ge 0} M^p(t)\big) \le c_p \E \big( (-\inf_{t \ge 0} M(t))^p\big),
\end{equation}
where $c_p:=\Big(4\wedge \frac 1p\Big) \frac{\pi p}{\sin( \pi p)}$.
\end{proposition}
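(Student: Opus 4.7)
The plan is to reduce to Brownian motion via Dambis--Dubins--Schwarz and then to derive the inequality in two steps: a distributional estimate obtained by optional stopping, followed by an integration/removal-of-conditioning argument.

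First, by the Dambis--Dubins--Schwarz theorem, one may write $M(t)=B(\langle M\rangle_t)$ for a Brownian motion $B$ on a (possibly enlarged) probability space. Setting $\tau:=\langle M\rangle_\infty$, a (possibly infinite) stopping time of $B$, one has $\sup_{t\ge 0}M(t)=\sup_{s\le\tau}B(s)=:S$ and $-\inf_{t\ge 0}M(t)=-\inf_{s\le\tau}B(s)=:I$, so it suffices to prove $\E[S^p]\le c_p\E[I^p]$ for Brownian motion stopped at an arbitrary stopping time $\tau$.

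The key distributional estimate is that for each $\lambda,\mu>0$,
$$
\P(S>\lambda,\ I\le\mu)\le\frac{\mu}{\lambda+\mu}.
$$
I would establish this by a gambler's-ruin-type argument: writing $\sigma_a:=\inf\{s\ge 0:B(s)=a\}$ and considering the stopping time $\rho:=\sigma_\lambda\wedge\sigma_{-\mu}\wedge\tau$ (localizing $\tau$ to bounded stopping times and passing to the limit, if necessary), one has $B_\rho=\lambda$ on $\{S>\lambda,\ I\le\mu\}$ while $B_\rho\ge-\mu$ throughout. Optional stopping $\E[B_\rho]=0$ then yields the claim. Integrating against $p\lambda^{p-1}\,d\lambda$ by Fubini and using $\int_0^\infty t^{p-1}(1+t)^{-1}\,dt=\pi/\sin(\pi p)$ gives, for each $\mu>0$,
$$
\E\bigl[S^p\,\1_{\{I\le\mu\}}\bigr]\le\frac{\pi p}{\sin(\pi p)}\,\mu^p=:C\mu^p.
$$

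The main obstacle is the final step: converting this conditional bound into the unconditional estimate $\E[S^p]\le c_p\E[I^p]$. The naive substitution $\mu=I$ is not directly permitted, since $I$ depends on $B$. I expect the author to give two complementary arguments and take the better of the two prefactors. A direct integration/layer-cake argument, exploiting the bound for all deterministic $\mu$ together with $\E[I^p]=p\int_0^\infty\mu^{p-1}\P(I>\mu)\,d\mu$, should produce a prefactor of order $1/p$, sharp as $p\to 0$. A good-$\lambda$-type inequality, derived from the strong Markov property of $B$ at $\sigma_\lambda$---bounding $\P(S>2\lambda,\,I\le\delta\lambda)$ in terms of $\P(S>\lambda)$ via a doubling argument and optimizing in $\delta$---should produce the prefactor $4$, preferable for $p$ bounded away from $0$. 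Taking the minimum yields $c_p=(4\wedge 1/p)\cdot\pi p/\sin(\pi p)$.
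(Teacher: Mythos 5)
You correctly reduce to a stopped Brownian motion via Dambis--Dubins--Schwarz, and both your gambler's-ruin estimate $\P(S>\lambda,\ I\le\mu)\le\mu/(\lambda+\mu)$ and the resulting bound $\E\big[S^p\1_{\{I\le\mu\}}\big]\le\frac{\pi p}{\sin(\pi p)}\,\mu^p$ for each fixed $\mu>0$ are valid. This is essentially Burkholder's starting point, and it also appears as a sketched alternative route in a note appended at the end of the paper. However, the step you yourself flag as ``the main obstacle'' --- passing from a bound with \emph{deterministic} $\mu$ to $\E[S^p]\le c_p\,\E[I^p]$ with the specific constant $\big(4\wedge\frac1p\big)\frac{\pi p}{\sin(\pi p)}$ --- is where essentially all the work lies, and your proposal leaves it as a plan rather than a proof. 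The fixed-$\mu$ bound does not yield the result by substituting $\mu=I$, and it blows up as $\mu\to\infty$; a layer-cake argument with $\mu=\mu(\lambda)$ plus rescaling must be carried out explicitly (and, as naively set up with $\mu(\lambda)=\lambda^\beta$, it produces moments of $I$ of order $p/\beta>p$, so it needs a further twist). Likewise it is not evident that a good-$\lambda$ inequality with doubling factor $2$ gives exactly the constant $4$; that assertion would have to be checked.

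The paper's actual proof proceeds differently and produces both constants from a single one-parameter family of estimates rather than two separate arguments. It introduces the passage times $\tau_i=\inf\{t:W(t)=-a_i\}$ for levels $a_i=c\gamma^i$, uses the strong Markov property to make the intermediate suprema $Y_i=\sup_{\tau_{i-1}\le t\le\tau_i}W(t)$ independent, computes $\E(Y_i\vee 0)^p$ in closed form, bounds $\P\{N\ge i\}$ by $\P\{B\ge a_{i-1}\}$, and sums. Letting $c\to 0$, the coefficient of $\E B^p$ is $(1-1/\gamma)\,\gamma^{2p}/(\gamma^p-1)$, which tends to $1/p$ as $\gamma\to 1$ and is dominated by $\inf_{\gamma>1}\gamma^{2p}/(\gamma^p-1)=4$, giving $4\wedge\frac1p$ at once. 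Your plan identifies the right probabilistic ingredients, but it stops short of the calculation that actually determines the constant; as it stands there is a genuine gap.
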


\begin{remark} The inequality was first proved by D.\ Burkholder ({\cite{B75}, Theorem 1.4}) even a bit more generally (for a larger class of functions than the $p$-th power) 
but without an explicit estimate of the numerical value of $c_p$. We provide a short and elementary alternative proof below.
\end{remark}

\begin{remark} It is clear that the previous proposition does not extend to $p \ge 1$: consider the continuous martingale 
$M(t):=W(\tau_{-1}\wedge t)$ where $W$ is standard Brownian motion and $\tau_x:=\inf\{s \ge 0: W(s)=x\}$. Then the left hand side of \eqref{inequality} 
is infinite for each $p \ge 1$ while the right hand side is finite. This example also shows that even though the constant $c_p$ is certainly not optimal, 
it is at most off from the optimal constant by the factor $4\wedge (1/p)$ (which converges to one as $p$ approaches one). 
It is also clear that the proposition does not extend to right-continuous 
martingales: consider a martingale which is constant except for a single jump at time 1 of height 1 with probability $\delta$ and 
height $-\frac \delta{1-\delta}$ with probability $1-\delta$ where $\delta \in (0,1)$. It is straightforward to check that for an 
inequality of type $(\E \sup_{t \ge 0}M^p(t))^{1/p}\le c_{p,q} (\E (-\inf_{t \ge 0}M(t))^q)^{1/q}$  to hold for this class of examples for some finite $c_{p,q}$, 
we require that $q\ge 1$ irrespective of the value of $p \in (0,1)$.
\end{remark}

\begin{proof}[ of Proposition \ref{martin}] 
Since $M$ is a continuous local martingale starting at 0 it can be represented as a time-changed Brownian motion $W$ (on a suitable probability space). 
We can and will assume that $M$ converges almost surely (otherwise there is nothing to prove), 
so there exists an almost surely finite stopping time $T$ for $W$ such that 
$A:=\sup_{0 \le t \le T}W(t)=\sup_{0 \le t}M(t)$ and $B:=-\inf_{0 \le t \le T}W(t)=-\inf_{0 \le t}M(t)$. Let $0=a_0<a_1<...$ be a sequence which converges 
to $\infty$ and define
$$
\tau_i:=\inf\{t \ge 0:W(t)=-a_i\},\; Y_i:= \sup_{\tau_{i-1} \le t \le \tau_i} W(t),\,i \in \N, \quad N:=\inf\{i\in \N: \tau_i \ge T\}. 
$$  
The $Y_i$ are independent by the strong Markov property of $W$ and for $p \in (0,1)$ and $i \in \N$ we have
$$
\Gamma_i:=\E (Y_i\vee 0)^p= \frac{a_i-a_{i-1}}{a_i^{1-p}} \int_0^{\infty}\frac 1{1+y^{1/p}}\,\dd y = \frac{a_i-a_{i-1}}{a_i^{1-p}} \frac{\pi p}{\sin( \pi p)}. 
$$
Therefore, 
\begin{align*}
\E A^p &\le \sum_{n=1}^{\infty} \E \Big( \sup\{Y_1,...,Y_n\}^p\1_{N=n}\Big)\le \sum_{n=1}^{\infty}  \sum_{i=1}^n  \E  \Big((Y_i\vee 0)^p \1_{N= n}\Big)   \\
 &= \sum_{i=1}^{\infty} \E  \Big((Y_i\vee 0)^p \1_{N\ge i}\Big) = \sum_{i=1}^{\infty}  \Gamma_i \P\{N \ge i\},
\end{align*}
where the last equality again follows from the strong Markov property. Inserting the formula for $\Gamma_i$, choosing the particular values $a_i=c\gamma^i$ 
for some $c>0$ and $\gamma>1$, and observing that $\P\{N \ge i\} \le \P\{B \ge a_{i-1}\}$, we get
\begin{align*}
\E A^p &\le  \frac{\pi p}{\sin( \pi p)} c^p \Big( \gamma^p +\Big(1-\frac 1\gamma \Big) \sum_{i=2}^{\infty}\gamma^{ip}\P\{ B \ge c \gamma^{i-1}\}\Big)\\
&= \frac{\pi p}{\sin( \pi p)} c^p \Big( \gamma^p + \Big(1-\frac 1\gamma \Big)  \sum_{j=2}^{\infty}\P\{ B \in [c \gamma^{j-1},c\gamma^j)\} 
\Big(\frac{\gamma^{p(j+1)}-1}{\gamma^p-1} -1 - \gamma^p \Big)\Big)\\
&\le \frac{\pi p}{\sin( \pi p)}  \Big( c^p\gamma^p +\Big(1-\frac 1\gamma \Big) \frac{\gamma^{2p}}{\gamma^p-1} \E B^p-c^p\Big(1-\frac 1\gamma \Big) 
\Big( \frac 1{\gamma^p-1}+1+\gamma^p\Big)\P\{B \ge c\gamma\}\Big). 
\end{align*}
Dropping the last (negative) term, letting $c \to 0$ and observing that the function of $\gamma$ in front of $\E B^p$ converges to $1/p$ as $\gamma \to 1$ and 
that $\inf_{\gamma>1}\gamma^{2p}/(\gamma^p-1)=4$ we obtain the assertion.
\end{proof}

Next, we apply the martingale inequality to prove a stochastic Gronwall lemma. A similar stochastic Gronwall lemma was proved and used in 
\cite{RS10} in order to prove existence and uniqueness of a solution to a stochastic functional differential equation satisfying a one-sided 
Lipschitz condition only. That result was slightly more general in the sense that on the right hand side of equation \eqref{Gron} $Z$ was replaced by its 
running supremum, but it was less general concerning the function $\psi$ and it required higher moments of $H^*$. The proof did not explicitly 
use a martingale inequality.

For a real-valued process denote $Y^*(t):=\sup_{0 \le s \le t} Y(s)$.

\begin{theorem}
Let $c_p$ be as in Theorem \ref{martin}. Let $Z$ and $H$ be nonnegative, adapted processes with continuous paths and assume that $\psi$ is nonnegative and progressively measurable. 
Let $M$ be a continuous local martingale starting at 0. If 
\begin{equation}\label{Gron}
Z(t) \le \int_0^t \psi(s) Z(s)\,\dd s + M(t) +H(t)
\end{equation}
holds for all $t \ge 0$, then for $p\in(0,1)$, and $\mu,\nu>1$ such that $\frac 1\mu + \frac 1\nu =1$ and $p\nu<1$, we have 
\begin{equation}\label{eins}
\E \sup_{0 \le s \le t} Z^p(s) \le (c_{p\nu}+1)^{1/\nu}  \Big(\E\exp\{p\mu\int_0^t \psi(s)\,\dd s\}\Big)^{1/\mu}  \Big(\E (H^*(t))^{p\nu}\Big)^{1/{\nu}}.
\end{equation}
If $\psi$ is deterministic, then
\begin{equation}\label{zwei}
\E \sup_{0 \le s \le t}\ Z^p(s) \le (c_p+1) \exp\{p\int_0^t \psi(s)\,\dd s\}  \Big(\E (H^*(t))^p\Big),
\end{equation}
and
\begin{equation}\label{drei}
\E Z(t) \le \exp\{\int_0^t \psi(s)\,\dd s\} \E H^*(t).
\end{equation}
\end{theorem}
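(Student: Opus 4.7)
My approach is to introduce an auxiliary continuous local martingale which absorbs $M$ and the Gronwall term together. Set $\beta(s):=\exp\{\int_0^s\psi(u)\,du\}$ and
$$
N(s):=\int_0^s\beta(u)^{-1}\,dM(u),
$$
a well-defined continuous local martingale since $\beta^{-1}\le 1$ is continuous and adapted. The heart of the argument is the pathwise estimate $Z(t)\le\beta(t)(H^*(t)+N(t))$.

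To establish this, let $A(s):=\int_0^s\psi(u)Z(u)\,du$; from \eqref{Gron} we get $A'(s)\le\psi(s)(A(s)+M(s)+H(s))$, so variation of parameters applied $\omega$-wise gives
$$
A(t)\le \beta(t)\int_0^t\beta^{-1}(s)\psi(s)\bigl(M(s)+H(s)\bigr)\,ds.
$$
Using $H(s)\le H^*(t)$ and $\int_0^t\beta^{-1}(s)\psi(s)\,ds=1-\beta^{-1}(t)$, the $H$-contribution is bounded by $H^*(t)(\beta(t)-1)$. For the $M$-contribution, the identity $\beta^{-1}(s)\psi(s)\,ds=-d\beta^{-1}(s)$ combined with integration by parts for the continuous semimartingale $M$ against the finite-variation process $\beta^{-1}$ gives
$$
\beta(t)\int_0^t\beta^{-1}(s)\psi(s)M(s)\,ds=-M(t)+\beta(t)N(t).
$$
Adding $M(t)+H(t)$ to $A(t)$ cancels $M(t)$ and yields $Z(t)\le\beta(t)(H^*(t)+N(t))$. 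Taking the supremum over $s\le t$ (using that $Z\ge 0$ and $\beta$ is nondecreasing) gives $Z^*(t)\le\beta(t)(H^*(t)+N^*(t))$; moreover $Z(s)\ge 0$ forces $N(s)\ge -H^*(s)\ge -H^*(t)$ for all $s\le t$, hence the crucial one-sided bound $-\inf_{s\le t}N(s)\le H^*(t)$.

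The three assertions then follow by applying Proposition \ref{martin} to $N$ rather than to $M$. For \eqref{eins} I raise to the $p$-th power, apply H\"older with exponents $\mu,\nu$ to separate $\beta(t)^p$ from $(H^*(t)+N^*(t))^p$, use subadditivity $(x+y)^{p\nu}\le x^{p\nu}+y^{p\nu}$ (valid since $p\nu<1$), and invoke Proposition \ref{martin} applied to $s\mapsto N(s\wedge t)$ to get $\E N^{*p\nu}(t)\le c_{p\nu}\E H^{*p\nu}(t)$; summing produces the constant $(c_{p\nu}+1)^{1/\nu}$. Inequality \eqref{zwei} is the same computation with $\beta(t)$ pulled out as a deterministic factor, with no H\"older step needed. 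For \eqref{drei}, assuming $\E H^*(t)<\infty$ (else nothing to prove), Fatou's lemma applied to the nonnegative process $N(\cdot\wedge\tau_n)+H^*(t)$ along a localizing sequence $(\tau_n)$ for $N$ yields $\E N(t)\le 0$, and taking expectations in the pathwise bound gives the result.

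The main subtlety lies in the integration-by-parts step: the $M(t)$ terms must cancel exactly so that the pathwise estimate takes the clean one-sided form $\beta(t)(H^*(t)+N(t))$ rather than an expression still involving $M(t)$ or requiring $Z^*$ on the right. It is precisely this cancellation which makes the inequality $N(s)\ge -H^*(s)$ fall out directly from $Z\ge 0$ and thereby lets Proposition \ref{martin} be used on $N$ without any circular dependence on $Z^*$ on the right-hand side.
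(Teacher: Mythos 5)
Your proof is correct and takes essentially the same route as the paper: defining the exponentially weighted stochastic integral $N$ (the paper calls it $L$), deriving the pathwise bound $Z(t)\le\beta(t)(H^*(t)+N(t))$ via Gronwall and integration by parts, noting that $Z\ge 0$ forces $-\inf_{s\le t}N(s)\le H^*(t)$, and then applying Proposition~\ref{martin} together with H\"older and subadditivity, with a Fatou argument for \eqref{drei}. The only difference is cosmetic: you spell out the variation-of-parameters and integration-by-parts calculation that the paper compresses into one sentence, and you obtain \eqref{drei} by first showing $\E N(t)\le 0$ rather than applying Fatou directly to $Z(\tau_n\wedge t)$.
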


\begin{proof} Let $L(t):=\int_0^t \exp\{-\int_0^s \psi(u)\,\dd u\}\,\dd M(s)$. 
Applying the usual Gronwall Lemma (for each fixed $\omega \in \Omega$) to $Z$ and integrating by parts, we obtain
\begin{equation}\label{estim}
Z(t) \le \exp\{\int_0^t\psi(s)\,\dd s\}( L(t)+H^*(t)).
\end{equation}
Since $Z$ is nonnegative, we have $-L(t)\le H^*(t)$ for all $t \ge 0$. Therefore, using Proposition \ref{inequality} and 
H\"older's inequality, we get
\begin{align*}
\E(Z^*)^p(t) &\le \Big(\E \exp\{p\mu\int_0^t\psi(s)\,\dd s\}\Big)^{1/\mu}\Big( \E (L^*(t))^{p\nu} +\E (H^*(t))^{p\nu}\Big)^{1/\nu}\\ 
&\le \Big(\E \exp\{p\mu\int_0^t\psi(s)\,\dd s\}\Big)^{1/\mu} (  c_{p\nu}+1)^{1/\nu}  \Big(\E (H^*(t))^{p\nu}\Big)^{1/\nu}, 
\end{align*}
which is \eqref{eins}. Inequality \eqref{zwei} follows similarly.
The final statement follows by applying \eqref{estim} to $\tau_n \wedge t$ for a sequence of localizing stopping times 
$\tau_n$ for $L$ and applying Fatou's Lemma. 
\end{proof}

{\em Acknowledgement.} It is a pleasure to thank R.~Ba{\~n}uelos for drawing my attention to the cited paper of Donald Burkholder.


\begin{thebibliography}{10}%
\bibliographystyle{abbrv}

\bibitem{B75}
  Burkholder, D.\ L. (1975).  
   \newblock One-sided maximal functions and $H^p$,
   \newblock{\em J. Func. Anal.}  {\bf 18}, 429--454.     

\bibitem{RS10} 
  v. Renesse, M., Scheutzow, M. (2010).  
   \newblock Existence and uniqueness of solutions of stochastic functional differential equations,
   \newblock{\em Random Oper. Stoch. Equ.}  {\bf 18}, 267--284.     
\end{thebibliography}
\end{document}